\newcommand{\diag}{\operatorname{diag}}
\def\gg{\mathfrak{g}}
\def\gh{\mathfrak{h}}
\def\gk{\mathfrak{k}}
\def\gm{\mathfrak{m}}
\def\C{\mathbb{C}}
\def\H{\mathbb{H}}
\def\O{\mathbb{O}}
\def\Z{\mathbb{Z}}
\def\bI{\mathbf{I}}
\def\Ad{{\rm Ad}}
\def\ad{{\rm ad}\,}
\def\rank{{\rm rank}\,}
\def\diag{{\rm diag}}
\renewcommand{\thesection}{\arabic{section}}
\renewcommand{\thetable}{{\large \thesection.\arabic{equation}}}
\newtheorem{theorem}[equation]{Theorem}
\newtheorem{lemma}[equation]{Lemma}
\newtheorem{proposition}[equation]{Proposition}
\def\sideremark#1{\ifvmode\leavevmode\fi\vadjust{\vbox to0pt{\vss
 \hbox to 0pt{\hskip\hsize\hskip1em
\vbox{\hsize2cm\tiny\raggedright\pretolerance10000 
 \noindent #1\hfill}\hss}\vbox to8pt{\vfil}\vss}}} 
\begin{document}

\title{Local and Global Homogeneity for Manifolds that admit a 
Positive Curvature Metric}

\author{Joseph A. Wolf}\thanks{Research partially supported by a Simons
Foundation grant, and by hospitality from the MATRIX math research institute 
at the Creswick campus of the University of Melbourne}
\address{Department of Mathematics \\ University of California, Berkeley \\
	CA 94720--3840, U.S.A.} \email{jawolf@math.berkeley.edu}

\date{file /texdata/working/pos\_curv\_normal/pos\_curv\_norm.tex 
last edited 2 July 2019}

\subjclass[2010]{22E45, 43A80, 32M15, 53B30, 53B35}

\keywords{Riemannian manifold, Riemannian covering,
positive curvature, homogeneous manifold, locally homogeneous manifold} 

\begin{abstract}
In this note we study globally homogeneous Riemannian quotients
$\Gamma\backslash (M,ds^2)$ of homogeneous Riemannian manifolds 
$(M,ds^2)$.  The Homogeneity Conjecture is that $\Gamma\backslash (M,ds^2)$
is (globally) homogeneous if and only if $(M,ds^2)$ is homogeneous 
and every $\gamma \in \Gamma$ is of constant displacement on 
$(M,ds^2)$.  We provide further evidence for that conjecture by 
(i) verifying it for normal homogeneous Riemannian manifolds $(M = G/H,ds^2)$
that also admit a $G$--invariant metric of positive curvature, and
(ii) showing that in most cases the normality condition can be dropped.  
\end{abstract}

\maketitle

\section{Introduction}\label{sec1} 
\setcounter{equation}{0}

We say that a homogeneous space $M = G/H$ is of {\em positive
curvature type} if it admits a $G$--invariant Riemannian metric
of strictly positive curvature.  Here we study such spaces and 
ask when their Riemannian quotient manifolds $\Gamma\backslash (M ,ds^2)$
are globally homogeneous.  

We show that a certain conjecture, concerning global
homogeneity for locally homogeneous Riemannian manifolds
$\Gamma \backslash (M,ds^2)$ holds in general when $M = G/H$ is of 
positive curvature type and $ds^2$ is the normal Riemannian metric.
That is Theorem \ref{conj-pos-curv}.
It depends on certain classifications (see Table \ref{shankar-table}
below) and on earlier results \cite{W2018} 
concerning isotropy--split fibrations.
We then explore the possibility of dropping the
normality requirement.  Theorem \ref{conj-non-normal} eliminates 
the normality condition on the Riemannian metric for all but three of the
Riemannian manifolds of positive curvature type.
That requires a modification
(\ref{new-setup}) of the isotropy splitting condition of \cite{W2018}.

We start by describing the Homogeneity Conjecture.
Let $(M,ds^2)$ be a connected simply connected Riemannian homogeneous
space.  Let $\pi: M \to M'$ be a Riemannian covering.  In other
words $\pi: M \to M'$ is a topological covering space that is a local
isometry.  Then the base of the covering must have form $M' = 
\Gamma \backslash M$
where $\Gamma$ is a discontinuous group of isometries of $M$ such that
only the identity element has a fixed point.  Clearly $M'$, with the 
induced Riemannian metric $ds'^2$ from $\pi: M \to M'$, is locally 
homogeneous.  We ask when $(M',ds'^2)$ is globally homogeneous.

If $M' = \Gamma \backslash M$ is homogeneous then \cite{W1960} every
element $\gamma \in \Gamma$ is of constant displacement
$\delta_\gamma(x) = dist(x,\gamma x)$ on $M$.  For the normalizer 
$N_{\bI(M,ds^2)}(\Gamma)$ of $\Gamma$ in the isometry group $\bI(M,ds^2)$
induces a transitive group of isometries on $(M',ds'^2)$.  Since $\Gamma$ 
is discrete the identity component of that normalizer actually
centralizes $\Gamma$ in $\bI(M,ds^2)$, and this centralizer is transitive
on $M$.  If $x, y \in M$ and $\gamma \in \Gamma$ we write 
$y = g(x)$ with $g$ in the centralizer of $\Gamma$.  Compute $\delta_\gamma(y)
= dist(y,\gamma y) = dist(g x,\gamma g x) = dist(gx,g\gamma x) 
= dist(x,\gamma x) = \delta_\gamma(x)$.  That is the easy half of the

\noindent {\bf Homogeneity Conjecture.}
{\it Let $M$ be a connected, simply connected Riemannian homogeneous manifold
and $M \to \Gamma \backslash M$ a Riemannian covering.  Then 
$\Gamma \backslash M$ is homogeneous if and only if every $\gamma \in \Gamma$
is an isometry of constant displacement on $M$.}

The first case is implicit in the thesis of Georges Vincent 
\cite[\S 10.5]{V1947};  he noted that the linear transformations
$\diag\{R(\theta), \dots , R(\theta)\}, R(\theta) = \left ( \begin{smallmatrix}
\cos(\theta) & \sin(\theta) \\ -\sin(\theta) & \cos(\theta) \end{smallmatrix}
\right )$, are of constant displacement on the sphere $S^{2n-1}$.  If
$\Gamma$ is the cyclic group of order $k$, generated by 
$\diag\{R(2\pi/k), \dots , R(2\pi/k)\}$, then it has centralizer 
$U(n)$ in $SO(2n)$ for $k > 2$, all of $SO(2n)$ for $k \leqq 2$,
so its centralizer is transitive on $S^{2n-1}$, and 
$\Gamma \backslash S^{2n-1}$ is homogeneous.
Vincent did not consider homogeneity, but he referred to such linear
transformations as {\sl Clifford translations} (``translation au sens de 
Clifford'') and examined space forms $\Gamma \backslash S^{2n-1}$ where
$\Gamma$ is a cyclic group $\langle \diag\{R(2\pi/k), \dots , R(2\pi/k)\}
\rangle$.

I extended this to a proof of the Homogeneity Conjecture, first for
spherical space forms \cite{W1961} and then for locally symmetric
Riemannian manifolds \cite{W1962}.  The proof in \cite{W1962}
used classification and case by case checking.  This was partially 
improved by Freudenthal \cite{F1963} and 
Ozols (\cite{O1969}, \cite{O1973}, \cite{O1974}), who gave direct proofs 
for the case where $\Gamma$ is contained in the identity component 
of $\bI(M,ds^2)$.

Since then a number of special cases of the Homogeneity Conjecture have
been verified.  The ones I know about are the case \cite{W1964} where 
$M$ is of non--positive sectional curvature, extended by Druetta \cite{D1983}
to the case where $M$ has no focal points; C\' ampoli's work
(\cite{C1983}, \cite{C1986}) on the case
where $M$ is a Stieffel manifold and $ds^2$ is the normal Riemannian metric; 
the case \cite{MMW1986} where $M$ admits a transitive semisimple group 
of isometries that has no compact factor; the case \cite{W2016} where
$M$ admits a transitive solvable group of isometries; the case \cite{W2018} 
where $M$ has a fibration such as that of Stieffel manifolds 
over Grassmann manifolds;
and the case \cite{XW2016} where every element of $\Gamma$ is close to the
identity and $M$ belongs to a certain class of Riemannian normal homogeneous 
spaces.  incidentally, the Homogeneity Conjecture is valid for locally symmetric 
Finsler manifolds as well \cite{DW2013}.

There has also been a lot of work on the infinitesimal version of constant
displacement isometries. Those are the Killing vector fields of constant 
length.  For
example see papers (\cite{BN2008a}, \cite{BN2008b}, \cite {BN2009}) of 
Berestovskii and Nikonorov, and, in the Finsler manifold setting, 
of Deng and Xu (\cite{DX2013}, \cite{DX2014a}, \cite{DX2014b}, \cite{DX2014c}, 
\cite{DX2015a}, \cite{DX2015b}, \cite{DX2016}).

In this note we verify the Homogeneity Conjecture for (i) the case where
$(M,ds^2)$ is a normal homogeneous Riemannian manifold of positive 
curvature type, (ii) most cases where $(M,ds^2)$ is a homogeneous Riemannian
manifold, not necessarily normal, of positive curvature type.  The three 
cases from which we have not yet eliminated the normality requirement are
the odd dimensional 
spheres $M = G/H = SU(m+1)/SU(m), Sp(m+1)/Sp(m), \text{ and } SU(2)/\{1\}$
with possibly non--standard metrics.  In effect, this is a progress report.

I thank Wolfgang Ziller for referring me to his survey \cite{Z2007}, which
clarified the exposition.

\section{The Classification for Positive Curvature}\label{sec2}
\setcounter{equation}{0}

The connected simply connected 
homogeneous Riemannian manifolds of positive sectional curvature were
classified by Marcel Berger \cite{B1961}, Nolan Wallach \cite{W1972},
Simon Aloff and Nolan Wallach \cite{AW1974}, and
Lionel B\' erard-Bergery \cite{B1976}.  Their isometry groups were worked
out by Krishnan Shankar \cite{S2001}.  The spaces and the isometry
groups are listed in the first two columns of Table \ref{shankar-table}
below.  The third column lists some fibrations that will be relevant to our
verification of the Homogeneity Conjecture for manifolds of positive 
curvature type.  See \cite[Section 4]{Z2007} for a description of exactly which
invariant metrics have positive sectional curvature.

\addtocounter{equation}{1}
{\footnotesize
\begin{longtable}{|r|l|l|c|}
\caption*{\bf {\normalsize Table} \thetable \quad {\normalsize
        Isometry Groups of CSC Homogeneous Manifolds of Positive
	Curvature \\ \phantom{XXXXXXXXXXXXXX} 
	And Fibrations over Symmetric Spaces}} 
\label{shankar-table} \\
\hline
 & $M = G/H$ & $\bI(M,ds^2)$ & $G/H \to G/K$  \\ \hline
\hline
\endfirsthead
\multicolumn{4}{l}{{\normalsize \textit{Table \thetable\, continued from
        previous page $ \dots$}}} \\
\hline
 & $M = G/H$ & $\bI(M,ds^2)$ & $G/H \to G/K$ \\ \hline
\hline
\endhead
\hline \multicolumn{4}{r}{{\normalsize \textit{$\dots$ Table \thetable\,
        continued on next page}}} \\
\endfoot
\hline
\endlastfoot
\hline
{\rm 1} & $S^n = SO(n+1)/SO(n)$ & $O(n+1)$ &
        \\ \hline 
{\rm 2}  & $P^m(\C) = SU(m+1)/U(m)$ & $PSU(m+1)\rtimes \Z_2$ &
        \\ \hline
{\rm 3} & $P^k(\H) = Sp(k+1)/(Sp(k)\times Sp(1))$ & $Sp(k+1)/\Z_2)$ &
        \\ \hline
{\rm 4} & $P^2(\O) = F_4/Spin(9)$ & $F_4$ &
        \\ \hline
{\rm 5} & $S^6 = G_2/SU(3)$ & $O(7)$ &
        \\ \hline
{\rm 6} & $P^{2m+1}(\C) = Sp(m+1)/(Sp(m)\times U(1))$ & 
		$(Sp(m+1)/\Z_2)\times \Z_2$ & $P^{2m+1}(\C) \to P^m(\H)$
         \\ \hline
{\rm 7} & $F^6 = SU(3)/T^2$ & $(PSU(3) \rtimes \Z_2) \times \Z_2$ &
		$F^6 \to P^2(\C)$
         \\ \hline
{\rm 8} & $F^{12} = Sp(3)/(Sp(1)\times Sp(1)\times Sp(1))$ &
		$(Sp(3)/\Z_2) \times \Z_2$ & $F^{12} \to P^2(\H)$
        \\ \hline
{\rm 9} & $F^{24} = F_4/Spin(8)$ & $F_4$ & $F^{24} \to P^2(\O)$
	\\ \hline
{\rm 10} & $M^7 = SO(5)/SO(3)$ & $SO(5)$ & 
        \\ \hline
{\rm 11} & $M^{13} = SU(5)/(Sp(2) \times_{\Z_2} U(1)$ & $PSU(5) \rtimes \Z_2$ &
		$M^{13} \to P^4(\C)$
        \\ \hline
{\rm 12} & $N_{1,1} = (SU(3) \times SO(3))/U^*(2)$ & 
		$(PSU(3) \rtimes \Z_2) \times SO(3)$ & 
        \\ \hline
{\rm 13} & $\begin{array}{l} N_{k,\ell} = SU(3)/U(1)_{k,\ell} \\
	   (k,\ell) \ne (1,1), 3|(k^2+\ell^2+k\ell) \end{array}$ &
		$(PSU(3)\rtimes \Z_2) \times (U(1)\rtimes \Z_2)$ &
		$N_{k,\ell} \to P^2(\C)$
        \\ \hline
{\rm 14} & $\begin{array}{l} N_{k,\ell} = SU(3)/U(1)_{k,\ell} \\
	   (k,\ell) \ne (1,1), 3\not| (k^2+\ell^2+k\ell) \end{array}$ &
		$U(3)\rtimes \Z_2$ &
		$N_{k,\ell} \to P^2(\C)$
        \\ \hline
{\rm 15} & $S^{2m+1} = SU(m+1)/SU(m)$ & $U(m+1)\rtimes\Z_2$ & 
		$S^{2m+1} \to P^m(\C)$
        \\ \hline
{\rm 16} & $S^{4m+3} = Sp(m+1)/Sp(m)$ & $Sp(m+1)\rtimes_{\Z_2} Sp(1)$ &
		$S^{4m+3} \to P^m(\H)$
        \\ \hline
{\rm 17} & $S^3 = SU(2)$ & $O(4)$ & $S^3 \to P^1(\C) = S^2$
        \\ \hline
{\rm 18} & $S^7 = Spin(7)/G_2$ & $O(8)$ & 
        \\ \hline
{\rm 19} & $S^{15} = Spin(9)/Spin(7)$ & $Spin(9)$ & $S^{15} \to S^8$
        \\ \hline
\end{longtable}
}

Most of the embeddings $H \hookrightarrow G$ in Table \ref{shankar-table} 
are obvious, but a few might need explanation.  For (9),
$Spin(8) \hookrightarrow Spin(9) \hookrightarrow F_4$. For (10),
$SO(3) \hookrightarrow SO(5)$ is the irreducible representation of
highest weight $4\lambda$ where $\lambda$ is the fundamental highest weight;
the tangent space representation is the irreducible representation of 
highest weight $6\lambda$. For (11), $Sp(2) \hookrightarrow SU(4)$ so
$Sp(2) \times U(1)$ maps to $U(5)$ with kernel $\{(I,1), (-I,-1)\}$ and 
image in $SU(5)$. The $\Z_2$ for the non--identity component of
$\bI(M,ds^2)$ here corresponds to complex conjugation on $SU(5)$. 
For (12),  $U^*(2)$ is the image of 
$U(2) \hookrightarrow (SU(3) \times SO(3))$, given by
$h \mapsto (\alpha(h),\beta(h))$ where $\alpha(h) = 
\left ( \begin{smallmatrix} h & 0 \\ 0 & 1/\det(h) \end{smallmatrix} \right )$
and $\beta$ is the projection $U(2) \to U(2)/(center) \cong SO(3)$.
For (13) and (14), $U(1) = H \hookrightarrow G = SU(3)$ is $e^{i\theta} 
\mapsto \diag\{e^{ik\theta},e^{i\ell \theta},e^{-i(k+\ell)\theta}\}$.
For (19), $Spin(7) \hookrightarrow Spin(8) \hookrightarrow
Spin(9)$.

Note that the first four spaces $M = G/H$ of Table 
\ref{shankar-table} are Riemannian symmetric spaces with $G = \bI(M)^0$. 
The fifth space is $S^6 = G_2/SU(3)$, where the isotropy group $SU(3)$ is 
irreducible on the tangent space, so the only invariant metric is the one
of constant positive curvature; thus it is isometric
to a Riemannian symmetric space.  In view of \cite{W1962}, 
\begin{proposition}\label{entries1-5}
The Homogeneity Conjecture is valid for the entries {\rm (1)} through
{\rm (5)} of {\rm Table \ref{shankar-table}}.
\end{proposition}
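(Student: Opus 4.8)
The plan is to reduce the proposition to the already-established case of Riemannian symmetric spaces, for which the Homogeneity Conjecture was proved in \cite{W1962}. The entire argument rests on identifying each of the five spaces, carrying its positive-curvature metric, as (isometric to) a Riemannian symmetric space, and then transporting the conclusion of \cite{W1962} along that isometry.

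First I would dispose of entries (1) through (4). These are precisely the compact rank-one symmetric spaces: the round sphere $S^n$ and the projective spaces $P^m(\C)$, $P^k(\H)$, $P^2(\O)$ over the complex numbers, the quaternions, and the octonions. Each is a Riemannian symmetric space whose full isometry identity component is the group $G = \bI(M)^0$ recorded in Table \ref{shankar-table}, and on each the invariant metric of positive curvature is the symmetric-space metric itself. Hence the Homogeneity Conjecture holds for these four directly by \cite{W1962}, with no further work.

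The one case requiring a short argument is entry (5), $S^6 = G_2/SU(3)$, where I would invoke the irreducibility of the isotropy representation. The isotropy group $SU(3)$ acts irreducibly on the real $6$-dimensional tangent space $T_{eH}(G_2/SU(3))$, so by Schur's lemma the space of $G_2$-invariant symmetric bilinear forms on that tangent space is one-dimensional. Thus there is, up to a positive scalar, a unique $G_2$-invariant Riemannian metric on $S^6$. Since $G_2 \subset SO(7)$ preserves the round metric, that round metric is $G_2$-invariant, and therefore the positive-curvature metric $ds^2$ must be a constant multiple of it. Consequently $(S^6 = G_2/SU(3), ds^2)$ is isometric to the standard round $S^6$, a Riemannian symmetric space, and \cite{W1962} applies once more.

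Assembling these observations yields the proposition: every entry among (1) through (5), equipped with its invariant metric of positive curvature, is a Riemannian symmetric space up to isometry, and the Homogeneity Conjecture for symmetric spaces established in \cite{W1962} transfers verbatim through the isometry. The only point that is not purely formal is the irreducibility of the $SU(3)$-isotropy action in entry (5) --- but this is classical (it is exactly the reason that $S^6 = G_2/SU(3)$ admits only the round metric as a $G_2$-invariant metric), so I anticipate no genuine obstacle here.
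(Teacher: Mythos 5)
Your proposal is correct and follows exactly the paper's own argument: the paper likewise notes that entries (1)--(4) are Riemannian symmetric spaces, handles entry (5) via the irreducibility of the $SU(3)$ isotropy action forcing the round (hence symmetric) metric, and then cites \cite{W1962} for all five. No substantive difference.
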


\section{Positive Curvature and Isotropy Splitting}\label{sec3}
\setcounter{equation}{0}

Some of the entries of Table \ref{shankar-table} are of
positive Euler characteristic, i.e. have $\rank H = \rank G$. In
those cases every element of $G = \bI(M,ds^2)^0$ is conjugate to
an element of $H$, hence has a fixed point on $M$.  Those are the
table entries (6), (7), (8) and (9).  Each is isotropy--split with
fibration over a projective (thus Riemannian symmetric) space, as defined in 
\cite[(1.1)]{W2018}.  The Homogeneity Conjecture follows, for 
these $(M,ds^2)$ where $ds^2$ is the normal Riemannian metric 
\cite[Corollary 5.7]{W2018}: 
\begin{proposition}\label{entries6-9}
The Homogeneity Conjecture is valid for the entries {\rm (6)} through
{\rm (9)} of {\rm Table \ref{shankar-table}}, where $ds^2$ is the normal
Riemannian metric on $M$.
\end{proposition}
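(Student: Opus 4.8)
The plan is to verify the nontrivial direction of the Homogeneity Conjecture --- the forward implication being already recorded in the Introduction --- so the task is: assuming every $\gamma \in \Gamma$ is of constant displacement on $(M,ds^2)$, exhibit a transitive group of isometries of $\Gamma\backslash M$. I would organize the argument around the equal-rank property of these four entries and the fibration structure supplied by Table \ref{shankar-table}.

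First I would record the equal-rank condition. Reading off the presentations $G/H$ for entries (6)--(9), one checks directly that $\rank H = \rank G$, so each $M$ has positive Euler characteristic. Since $H$ then contains a maximal torus of the compact connected group $G$, and since every element of a compact connected group is conjugate into any fixed maximal torus, every element of $G = \bI(M,ds^2)^0$ is conjugate to an element of $H$. Translating to the coset space, this says that every element of the identity component of the isometry group fixes a point of $M = G/H$.

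Next I would use the fixed-point property to locate $\Gamma$. An isometry of constant displacement that fixes a point has displacement function identically zero, hence equals the identity; therefore no nontrivial element of $\Gamma$ can lie in $\bI(M,ds^2)^0$. Thus every nontrivial $\gamma \in \Gamma$ lies in a nonidentity component of $\bI(M,ds^2)$, whose component group is visible in the second column of Table \ref{shankar-table} (the $\Z_2$ and $\rtimes\Z_2$ factors). This already constrains $\Gamma$ severely.

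The substance of the proof is then the fibration. Each of these spaces fibers over a projective space --- respectively $P^m(\H)$, $P^2(\C)$, $P^2(\H)$, and $P^2(\O)$ --- which is Riemannian symmetric, and with the normal metric the fibration $G/H \to G/K$ is isotropy-split in the sense of \cite[(1.1)]{W2018}. Granting this, \cite[Corollary 5.7]{W2018} applies and yields homogeneity of $\Gamma\backslash M$. The main obstacle I expect is the last step: verifying the isotropy-splitting condition for each of the four fibrations and confirming that the normal metric meets the precise hypotheses of \cite[Corollary 5.7]{W2018}, in particular that the nonidentity-component elements surviving the fixed-point reduction are exactly the ones that corollary controls. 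The equal-rank and fixed-point steps are routine; it is the case-by-case matching to the machinery of \cite{W2018} that carries the weight.
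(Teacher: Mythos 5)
Your proposal follows essentially the same route as the paper: the paper likewise observes that entries (6)--(9) satisfy $\rank H = \rank G$, so every element of $G = \bI(M,ds^2)^0$ is conjugate into $H$ and has a fixed point on $M$, and then invokes the isotropy--split fibrations over projective (hence Riemannian symmetric) spaces together with \cite[Corollary 5.7]{W2018}. The fixed-point reduction forcing nontrivial constant-displacement isometries into the nonidentity components, which you make explicit, is implicit in the paper's brief argument and is handled by the cited machinery of \cite{W2018}.
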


The argument of Proposition \ref{entries6-9} applies with only obvious
changes to a number of other table entries, using \cite[Theorem 6.1]{W2018}
instead of \cite[Corollary 5.7]{W2018}.  That gives us
\begin{proposition}\label{many-split-iso}
The Homogeneity Conjecture is valid for the entries
{\rm (11), (13), (14), (15), (16), (17) and (19)} of
{\rm Table \ref{shankar-table}}, where $ds^2$ is the normal
Riemannian metric on $M$.
\end{proposition}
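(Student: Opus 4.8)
The plan is to handle the seven entries uniformly, reproducing the argument of Proposition \ref{entries6-9} almost verbatim except that \cite[Corollary 5.7]{W2018} is replaced by \cite[Theorem 6.1]{W2018}. The only feature of entries {\rm (6)}--{\rm (9)} actually used in Proposition \ref{entries6-9}, beyond the isotropy--split fibration, was that $\rank H = \rank G$, which forces every element of $G = \bI(M,ds^2)^0$ to be conjugate into $H$ and hence to have a fixed point; that is precisely the extra hypothesis of Corollary 5.7. The entries treated here need not have positive Euler characteristic, so this fixed--point property can fail, and one must instead invoke Theorem 6.1, whose hypotheses do not include it.

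First I would read the relevant fibration $G/H \to G/K$ off the third column of Table \ref{shankar-table} and confirm that in every case the base $G/K$ is a compact Riemannian symmetric space: $P^4(\C)$ for {\rm (11)}, $P^2(\C)$ for {\rm (13)} and {\rm (14)}, $P^m(\C)$ for {\rm (15)}, $P^m(\H)$ for {\rm (16)}, $P^1(\C) = S^2$ for {\rm (17)}, and $S^8$ for {\rm (19)}. Since the Homogeneity Conjecture is already known for symmetric spaces by \cite{W1962}, each base supplies the input required by the inductive mechanism of \cite[Theorem 6.1]{W2018}: a constant displacement isometry of $(M,ds^2)$ projects to a constant displacement, hence Clifford, isometry of the symmetric base, after which the fiber analysis built into Theorem 6.1 places the original isometry in a subgroup of $\bI(M,ds^2)$ whose identity component centralizes $\Gamma$ and acts transitively on $M$; this yields homogeneity of $\Gamma\backslash M$. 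For {\rm (13)} and {\rm (14)} the two cases differ only in the full group $\bI(M,ds^2)$, not in the fibration or the splitting, so the same argument covers both.

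Second, for each entry I would verify the isotropy--splitting condition \cite[(1.1)]{W2018} for the chain $H \hookrightarrow K \hookrightarrow G$, i.e. that the reductive complement to $\gh$ in $\gg$ decomposes $H$--invariantly and metric--orthogonally into the tangent space $\gk/\gh$ of the fiber $K/H$ and the tangent space $\gg/\gk$ of the base. For the sphere entries {\rm (15)}, {\rm (16)}, {\rm (19)} and for {\rm (17)} the fibrations are the complex, quaternionic, and octonionic Hopf fibrations together with the circle bundle $SU(2) \to S^2$, where $K = U(m),\ Sp(m)\times Sp(1),\ Spin(8),\ U(1)$ respectively and the splitting is standard. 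For {\rm (13)} and {\rm (14)} one uses the circle $U(1)_{k,\ell} \hookrightarrow S(U(2)\times U(1)) \hookrightarrow SU(3)$, and for {\rm (11)} the chain $Sp(2)\times_{\Z_2} U(1) \hookrightarrow S(U(4)\times U(1)) \hookrightarrow SU(5)$ coming from the embeddings described after the table.

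The main obstacle I anticipate is the verification of \cite[(1.1)]{W2018} for the non--standard entries {\rm (11)}, {\rm (13)} and {\rm (14)}: one must check not merely that a fibration over the projective base exists set--theoretically, but that the isotropy representation genuinely splits in the sense of (1.1) and that the normal metric $ds^2$ restricts orthogonally to fiber and base, so that the projected map on $G/K$ is again an isometry of constant displacement to which \cite{W1962} may be applied. By contrast the sphere entries are routine, since there the splitting is the classical Hopf decomposition of the tangent bundle of a sphere into horizontal and vertical parts.
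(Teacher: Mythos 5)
Your proposal is correct and follows essentially the same route as the paper, whose entire proof is the remark that the argument of Proposition \ref{entries6-9} applies ``with only obvious changes,'' substituting \cite[Theorem 6.1]{W2018} for \cite[Corollary 5.7]{W2018}; you correctly identified that the substitution is forced because $\rank H = \rank G$ can fail for these entries. Your additional work---checking that each base $G/K$ in the third column is symmetric and that the splitting condition of \cite[(1.1)]{W2018} holds for each chain $H \subset K \subset G$---is exactly the content the paper compresses into ``obvious changes.''
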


\section{The Three Remaining Positive Curvature Cases}\label{sec4}
\setcounter{equation}{0}

In positive curvature it remains only to verify the Homogeneity Conjecture 
for table entries $M = G/H$ given by (10) $M^7 = SO(5)/SO(3)$, 
(12) $N_{1,1} = (SU(3)\times SO(3))/U^*(2)$, and
(18) $S^7 = Spin(7)/G_2$\,.  For (10) and (18), $H$ is irreducible on the 
tangent space of $M$.  In particular for (18) $ds^2$ must be the constant
positive curvature metric on $S^7$, where the Homogeneity Conjecture is known:

\begin{lemma}\label{entry-18}
The Homogeneity Conjecture is valid for entry {\rm (18)}
of {\rm Table \ref{shankar-table}}, where $ds^2$ is the normal
Riemannian metric on $M$.
\end{lemma}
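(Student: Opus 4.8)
The plan is to reduce this entry to the classical spherical space-form case, exploiting irreducibility of the isotropy representation. The essential point, already noted in the surrounding discussion, is that in $S^7 = Spin(7)/G_2$ the isotropy group $G_2$ acts on the $7$-dimensional tangent space through its fundamental $7$-dimensional representation—the realization of $G_2$ as the automorphism group of the imaginary octonions. First I would record that this representation is of real (orthogonal) type and absolutely irreducible, which is classical.

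From absolute irreducibility the uniqueness of the invariant metric is immediate via Schur's Lemma: the space of $Spin(7)$-invariant symmetric bilinear forms on the tangent space is one-dimensional, so up to a positive scalar there is a single $Spin(7)$-invariant Riemannian metric on $M$. Since $Spin(7) \hookrightarrow SO(8)$ acts by isometries of the round $S^7$, the round metric is one such invariant metric; hence the normal metric $ds^2$ must coincide with it after rescaling. Thus $(M,ds^2)$ is isometric to a sphere of constant positive curvature, with full isometry group $O(8)$ as listed in the second column of Table \ref{shankar-table}.

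It then remains only to invoke the validity of the Homogeneity Conjecture for spherical space forms, established in \cite{W1961}: for the round sphere, a Riemannian quotient $\Gamma \backslash S^7$ is homogeneous precisely when every $\gamma \in \Gamma$ is a Clifford translation, that is, an isometry of constant displacement on $(M,ds^2)$. Because the normal metric here is exactly the round metric, the conjecture for entry (18) follows at once. There is no genuine obstacle in this argument; the only step meriting attention is the absolute irreducibility of the $G_2$-action, which forces $(M,ds^2)$ onto the round sphere, after which the known space-form result of \cite{W1961} does all the work.
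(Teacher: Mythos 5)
Your proposal is correct and follows essentially the same route as the paper, which likewise observes that the isotropy group $G_2$ is irreducible on the tangent space, so $ds^2$ must be the constant positive curvature metric on $S^7$, and then invokes the known validity of the Homogeneity Conjecture for round spheres (\cite{W1961}, subsumed in \cite{W1962}). One minor wording slip worth fixing: the Schur-type uniqueness argument concerns $G_2$-invariant (i.e.\ isotropy-invariant) symmetric bilinear forms on the tangent space at the base point, not $Spin(7)$-invariant ones.
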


Now consider the case $M^7 = G/H = SO(5)/SO(3)$.  There $H$ acts
irreducibly on the tangent space, so the only invariant metric is a
normal one.  Let $\gg = \gh + \gm$ where $\gh \perp \gm$ and $\gm$
represents the tangent space at $x_0 = 1H \in G/H$.  If $\eta \in \gm$
then $t \mapsto \exp(t\eta)$ is a geodesic based at $x_0$.

Suppose that we have an isometry $\gamma$ of some
constant displacement $d > 0$.  As $\bI(M,ds^2) = SO(5)$ is
connected we have $\xi \in \gm$ such that $\sigma(t) = \exp(t\xi)x_0\,,
0 \leqq t \leqq 1$, is a minimizing geodesic in $M$ from $x_0$ to 
$\gamma(x_0)$.  Let $X$ denote the Killing vector field on $M$ 
corresponding to $\xi$.  Note that $||X_{x_0}|| = ||\xi|| = d$.
Let $g \in G$ and $y = gx_0 \in M$.  Then $t \mapsto g\sigma(t) =
g\exp(t\xi)x_0$ is a minimizing geodesic in $M$ from $y$ to $g\gamma(x_0) = 
\Ad(g)(\gamma) (y)$.  Since $\Ad(g)(\gamma)$ has the same constant
displacement $d$ as $\gamma$ we have $||(g_*X)_{x_0}|| = d$.
But $||(g_*X)_{x_0}|| = ||X_y||$, in other words
$||X_y|| = ||X_{x_0}||$.
Thus $X$ is a Killing vector field of constant length on $SO(5)/SO(3)$.
There is no such nonzero vector field \cite{XW2016}, so $\gamma$
does not exist.  We have proved

\begin{lemma}\label{entry-10}
There is no isometry $\ne 1$ of constant displacement on the manifold $M^7$
with normal Riemannian metric.
In particular the Homogeneity Conjecture is valid for entry {\rm (10)}
of {\rm Table \ref{shankar-table}}, where $ds^2$ is the normal
Riemannian metric on $M^7$.
\end{lemma}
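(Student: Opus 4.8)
The plan is to reduce the assertion about constant-displacement isometries to the nonexistence of a nonzero Killing vector field of constant length on $M^7 = SO(5)/SO(3)$, and then to invoke that nonexistence to rule out any nontrivial such isometry.

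First I would record the setup. Writing $\gg = \gh + \gm$ with $\gh \perp \gm$, where $\gm$ represents the tangent space at $x_0 = 1H$, the isotropy group $H = SO(3)$ acts irreducibly on $\gm$, so the $G$--invariant metric is, up to scale, the normal one; thus there is a single metric to treat. Then, arguing by contradiction, I would suppose $\gamma$ is an isometry of some constant displacement $d > 0$. Since $\bI(M,ds^2) = SO(5)$ is connected and acts transitively, I can realize the displacement at $x_0$ by a minimizing geodesic $\sigma(t) = \exp(t\xi)x_0$, $0 \le t \le 1$, with $\xi \in \gm$ and $||\xi|| = d$.

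The key step is to upgrade this single geodesic segment to a \emph{global} length condition on the Killing field $X$ generated by $\xi$, for which $||X_{x_0}|| = ||\xi|| = d$. Given any $y = g x_0$ with $g \in G$, left translation by $g$ sends $\sigma$ to a minimizing geodesic from $y$ to $g\gamma(x_0) = \Ad(g)(\gamma)(y)$. Because $\Ad(g)(\gamma)$ is conjugate to $\gamma$, it has the same constant displacement $d$; hence the pushed-forward initial velocity, which is exactly $X_y$, again has length $d$. This yields $||X_y|| = ||X_{x_0}||$ for every $y$, so $X$ is a nonzero Killing field of constant length on $M^7$.

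Finally I would invoke the (cited) fact that $SO(5)/SO(3)$ carries no nonzero Killing field of constant length; the resulting contradiction shows that no $\gamma \ne 1$ of constant displacement exists. The Homogeneity Conjecture for entry (10) then follows at once: any deck group $\Gamma$ of a Riemannian covering consists of constant-displacement isometries, so $\Gamma = \{1\}$ and $M^7$ is its own only quotient, trivially homogeneous. I expect the principal obstacle to be the reduction of the third step — in particular, justifying that the minimizing geodesic may be taken with initial velocity in $\gm$ and that conjugation propagates the displacement uniformly over all of $M$ — whereas the nonexistence of constant-length Killing fields is a representation-theoretic input I would draw from the classification rather than reprove here.
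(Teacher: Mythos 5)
Your proposal is correct and follows the paper's own proof of Lemma \ref{entry-10} essentially step for step: the same decomposition $\gg = \gh + \gm$ with irreducible isotropy forcing the normal metric, the same minimizing geodesic $\sigma(t) = \exp(t\xi)x_0$ with $\xi \in \gm$, the same left--translation/conjugation step (that $\Ad(g)(\gamma)$ has the same constant displacement $d$) to conclude $||X_y|| = ||X_{x_0}||$ everywhere, and the same appeal to \cite{XW2016} for the nonexistence of a nonzero Killing vector field of constant length on $SO(5)/SO(3)$. The only cosmetic difference is notational: the paper writes the translated velocity as a value of $g_*X$ rather than identifying it directly with $X_y$, but the argument, and even the delicate point you flag, are the same.
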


Finally, consider the case (12) of $N_{1,1} = G/H = 
(SU(3)\times SO(3))/U^*(2)$\,.  
Let $\gamma$ be an isometry of constant displacement $d > 0$ on
$N_{1,1}$\,.  Suppose that $\gamma^2$ also is an isometry of constant
displacement.  The argument of Lemma \ref{entry-18} shows that $\gamma$
cannot belong to the identity component $G = SU(3)\times SO(3)$ of 
$\bI(N_{1,1})$.  Further, $\gamma^2$ belongs to that identity
component, so the argument of Lemma \ref{entry-18} shows that
$\gamma^2 = 1$.

Now $\gamma = (g_1,g_2)\nu$ where $g_1 \in SU(3)$, $g_2 \in SO(3)$,
$\nu^2 = 1$, $\Ad(\nu)$ is complex conjugation on $SU(3)$, and
$\Ad(\nu)$ is the identity on $SO(3)$.  The centralizer of $\nu$ 
is $K := ((SO(3) \times SO(3)) \cup (SO(3) \times SO(3))\nu$.  
Let $T_1$ (resp. $T_2$) be a maximal torus of the first (resp. second) 
$SO(3)$.  Following de Siebenthal \cite{dS1956} we may assume $g_i \in T_i$
where we replace $\gamma$ be a conjugate.  Compute
$\gamma^2 = (g_1,g_2)(\overline{g_1},g_2) = (g_1^2,g_2^2)$.
We have reduced our considerations to the cases where $g_1$ is either
the identity matrix $I_3$ or the matrix $I'_3 := 
\left ( \begin{smallmatrix} -1 & 0 & 0 \\ 0 & -1 & 0 \\ 0 & 0 & +1 
\end{smallmatrix} \right )$, and also $g_2$ is either $I_3$ or $I'_3$. 

Recall that $H = U^*(2)$ is the image of
$U(2) \hookrightarrow (SU(3) \times SO(3))$, given by
$h \mapsto (\alpha(h),\beta(h))$ where $\alpha(h) = 
\left ( \begin{smallmatrix} h & 0 \\ 0 & 1/\det(h) \end{smallmatrix} \right )$
and $\beta$ is the projection $U(2) \to U(2)/(center) \cong SO(3)$.
Further, $\bI(L_{1,1}) = G \cup G\nu$
and its isotropy subgroup is $H \cup H\nu$.  Observe that

\qquad if $(g_1,g_2) = (I_3\,, I_3)$ then $\gamma = 
	(\alpha(I_2),\beta(I_2))\nu \in H\nu$, and

\qquad if $(g_1,g_2) = (I'_3\,, I_3)$ then $\gamma = 
        (\alpha(-I_2),\beta(-I_2))\nu \in H\nu$.

\noindent
We can replace $I'_3$ by its $SO(3)$--conjugate $I''_3 =
\left ( \begin{smallmatrix} -1 & 0 & 0 \\ 0 & +1 & 0 \\ 0 & 0 & -1 
\end{smallmatrix} \right )$.  Set $I''_2 =
\left ( \begin{smallmatrix} -1 & 0 \\ 0 & +1 \end{smallmatrix} \right )$. 
Note that

\qquad if $(g_1,g_2) = (I''_3\,, I''_3)$ then $\gamma = 
        (\alpha(I''_2),\beta(I''_2))\nu \in H\nu$.

\noindent
When $\gamma \in H\nu$ it cannot be of nonzero constant displacement. 
We have reduced our considerations to the case $\gamma = (I_3\,, I'_3)\nu$,
or equivalently to one of its conjugates.  Compute

$\left ( \left ( \begin{smallmatrix} i & 0 & 0 \\ 0 & i & 0 \\ 0 & 0 & -1  
\end{smallmatrix} \right ), I_3\right ) \cdot (I_3\,, I'_3)\nu \cdot
\left ( \left ( \begin{smallmatrix} i & 0 & 0 \\ 0 & i & 0 \\ 0 & 0 & -1  
\end{smallmatrix} \right ), I_3 \right )^{-1} =
\left ( \left ( \begin{smallmatrix} i & 0 & 0 \\ 0 & i & 0 \\ 0 & 0 & -1  
\end{smallmatrix} \right ), I_3 \right ) \cdot (I_3\,, I'_3) \cdot
\left ( \left ( \begin{smallmatrix} i & 0 & 0 \\ 0 & i & 0 \\ 0 & 0 & -1  
\end{smallmatrix} \right ), I_3 \right )  \nu =
(I'_3\,, I'_3)\nu  \in H\nu$,

\noindent
so again $\gamma$ cannot be of nonzero constant displacement.
We have proved

\begin{lemma}\label{entry-12}
There is no isometry $\gamma \ne 1$ of constant displacement on the 
manifold $N_{1,1}$ with normal Riemannian metric, 
for which $\gamma^2$ also is of constant displacement.
In particular the Homogeneity Conjecture is valid for entry {\rm (12)}
of {\rm Table \ref{shankar-table}} with normal Riemannian metric on $N_{1,1}$\,.
\end{lemma}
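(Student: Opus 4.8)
The plan is to use the hypothesis on $\gamma^2$ to reduce to the case where $\gamma$ is an \emph{involution} lying in the non-identity component $G\nu$ of $\bI(N_{1,1})$, and then to run a finite conjugacy analysis showing that every such involution fixes a point of $N_{1,1}$, which is incompatible with displacement $d>0$. First I would rule out $\gamma\in G=SU(3)\times SO(3)$: choosing $\xi\in\gm$ so that $t\mapsto\exp(t\xi)x_0$ is a minimizing geodesic from $x_0$ to $\gamma(x_0)$, the associated Killing field would have constant length $d$, exactly as in the proof of Lemma~\ref{entry-10}; as $N_{1,1}$ carries no nonzero Killing field of constant length (the mechanism of \cite{XW2016}), this is impossible. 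Hence $\gamma\in G\nu$, and since $\nu^2=1$ we have $\gamma^2\in G$; as $\gamma^2$ is of constant displacement, the same argument gives $\gamma^2=1$. So $\gamma=(g_1,g_2)\nu$ is an involution.

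The organizing principle for the rest is that the value of a constant displacement is a conjugacy invariant, and that any isometry conjugate into the isotropy group $H\cup H\nu$ fixes a point, hence has displacement $0$ there; so it is enough to show that every involution in $G\nu$ is $G$-conjugate into $H\nu$. I would conjugate $\gamma$ so that $(g_1,g_2)$ lies in the $\nu$-fixed subgroup $SO(3)\times SO(3)$ of the centralizer $K=(SO(3)\times SO(3))\cup(SO(3)\times SO(3))\nu$ of $\nu$, and then invoke de Siebenthal \cite{dS1956} to assume $g_1,g_2$ lie in maximal tori $T_1,T_2$ of the two $SO(3)$ factors. Then $g_1$ is real, so $\gamma^2=(g_1^2,g_2^2)=1$ forces $g_1,g_2$ to be involutions in $SO(3)$, each conjugate to $I_3$ or to $I'_3=\diag(-1,-1,1)$. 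This leaves exactly four cases for $(g_1,g_2)$.

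I would then dispatch the cases using $\alpha(h)=\diag(h,1/\det h)$ and $\beta:U(2)\to U(2)/(\text{center})\cong SO(3)$. For $(I_3,I_3)$ and $(I'_3,I_3)$, and --- after replacing $I'_3$ by its $SO(3)$-conjugate $I''_3=\diag(-1,1,-1)$ --- for $(I''_3,I''_3)$, one reads off $\gamma=(\alpha(h),\beta(h))\nu$ with $h=I_2$, $-I_2$, $I''_2=\diag(-1,1)$ respectively, so $\gamma\in H\nu$ fixes $x_0$, contradicting $d>0$. The only case not of this form is $(I_3,I'_3)$: here I would conjugate by $(\diag(i,i,-1),I_3)$, which carries $(I_3,I'_3)\nu$ to $(I'_3,I'_3)\nu$, and the latter is $G$-conjugate to $(\alpha(I''_2),\beta(I''_2))\nu\in H\nu$. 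All four cases thus fix a point, so no $\gamma$ with the stated properties can have $d>0$, and the Homogeneity Conjecture for entry~(12) follows.

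The hard part will be the last case $(I_3,I'_3)$: unlike the other three, $\gamma$ does not visibly stabilize $x_0$, and the hidden fixed point only surfaces after the explicit conjugation by $(\diag(i,i,-1),I_3)$ and the identification of $(I'_3,I'_3)$ with a point of $H$ via $\beta$. A secondary issue requiring care is the identity-component step, which relies on the absence of nonzero constant-length Killing fields on $N_{1,1}$.
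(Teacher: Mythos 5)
Your proposal is correct and follows essentially the same route as the paper's own proof: the constant-length Killing field argument to force $\gamma\in G\nu$ with $\gamma^2=1$, reduction via the centralizer $K=(SO(3)\times SO(3))\cup(SO(3)\times SO(3))\nu$ of $\nu$ and de Siebenthal to the same four involution cases, and the identical conjugation by $(\diag(i,i,-1),I_3)$ to push the one nontrivial case $(I_3,I'_3)\nu$ into $H\nu$. The only (harmless) difference is that you correctly attribute the identity-component step to the mechanism of Lemma \ref{entry-10}, where the paper's text cites Lemma \ref{entry-18} --- evidently a reference slip in the paper, since the Killing-field argument appears in the proof of Lemma \ref{entry-10}.
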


Combining Lemma \ref{entry-18}, \ref{entry-10} and \ref{entry-12} we have
\begin{proposition}\label{entries18-12-10}
The Homogeneity Conjecture is valid for the entries {\rm (10), (12)} and
{\rm (18)} of {\rm Table \ref{shankar-table}}, where $ds^2$ is the normal
Riemannian metric on $M$.
\end{proposition}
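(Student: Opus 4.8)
The plan is to assemble the Proposition directly from the three lemmas just proved, since entries (10), (12), and (18) are pairwise distinct and each is governed by exactly one of them. First I would dispose of entry (18): Lemma \ref{entry-18} asserts the Homogeneity Conjecture for $S^7 = Spin(7)/G_2$ outright. The point there is that $G_2$ acts irreducibly on the tangent space, which forces $ds^2$ to be the constant-curvature metric, and for that metric the conjecture is exactly the classical spherical space form result. So no further argument is needed for (18).

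For entries (10) and (12) the lemmas are phrased as non-existence statements, so the remaining task is to explain why they deliver the conjecture. I would use that the easy half of the conjecture is already established, so only the reverse implication is at issue: if every $\gamma \in \Gamma$ is of constant displacement on $M$, then $\Gamma \backslash M$ is homogeneous. For entry (10), Lemma \ref{entry-10} says no isometry $\ne 1$ has constant displacement, so $\Gamma = \{1\}$ and $\Gamma \backslash M = M$ is trivially homogeneous. For entry (12) I would invoke that $\Gamma$ is a group: if $\gamma \in \Gamma$ is of constant displacement then so is $\gamma^2 \in \Gamma$, and Lemma \ref{entry-12} then forces $\gamma = 1$; thus again $\Gamma = \{1\}$ and the quotient is $M$ itself.

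The only point requiring attention — and it is the conceptual heart rather than a genuine obstacle — is this passage from the non-existence lemmas to the conjecture, which rests on the group closure of $\Gamma$ together with the stability of the constant-displacement property under passing from $\gamma$ to $\gamma^2$. I do not expect any technical difficulty: once that observation is in place, entries (10), (12), and (18) exhaust the three cases named, and the Proposition follows by combining Lemmas \ref{entry-18}, \ref{entry-10}, and \ref{entry-12}.
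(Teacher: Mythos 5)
Your proof is correct and takes essentially the same route as the paper, which simply combines Lemmas \ref{entry-18}, \ref{entry-10} and \ref{entry-12}; the bridging step you spell out (for (10), constant displacement forces $\Gamma=\{1\}$, and for (12), group closure puts $\gamma^2\in\Gamma$ so the lemma's hypothesis is automatic) is exactly what the ``in particular'' clauses of those lemmas already encode.
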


Finally, combine Propositions \ref{entries1-5}, \ref{entries6-9},
\ref{many-split-iso} and \ref{entries18-12-10} to obtain our main
result:

\begin{theorem}\label{conj-pos-curv}
Let $(M,ds^2)$ be a connected, simply connected normal homogeneous
Riemannian manifold of positive curvature type.  Then
the Homogeneity Conjecture is valid for $(M,ds^2)$.
\end{theorem}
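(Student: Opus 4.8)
The plan is to treat this theorem as the concluding assembly of a case analysis organized around the classification of positive curvature type manifolds, so that essentially no new work is required beyond citing the propositions already in hand. First I would invoke the classification: by the results of Berger, Wallach, Aloff--Wallach, and B\'erard-Bergery, a connected, simply connected homogeneous Riemannian manifold $(M,ds^2)$ of positive curvature type has underlying coset space $M = G/H$ appearing as one of the nineteen entries of Table \ref{shankar-table}. The normality hypothesis on $ds^2$ is exactly the standing assumption under which each preceding proposition was proved, so the hypotheses of the theorem place us precisely in the situation those propositions address.

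Next I would sort the nineteen entries according to which proposition disposes of them. Proposition \ref{entries1-5} handles (1)--(5); Proposition \ref{entries6-9} handles the positive Euler characteristic cases (6)--(9) through isotropy-split fibrations over symmetric spaces; Proposition \ref{many-split-iso} handles (11), (13), (14), (15), (16), (17), and (19) by the more general isotropy-splitting argument; and Proposition \ref{entries18-12-10} handles the three residual entries (10), (12), and (18). Each of these propositions verifies the Homogeneity Conjecture for its entries with the normal Riemannian metric, so it remains only to confirm that the four families exhaust $\{1, 2, \dots, 19\}$ with no entry omitted. They do, and combining the four propositions then yields the theorem at once.

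The hard part, such as it is, is not analytic but verificational: the whole argument rests on the completeness of the classification recorded in Table \ref{shankar-table} together with the guarantee that every single table entry has been assigned to some proposition, leaving no gap. Once that coverage check succeeds---as it does---the conclusion is immediate, requiring no further estimate, construction, or case to be examined.
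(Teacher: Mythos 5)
Your proposal matches the paper's own proof essentially verbatim: the paper concludes Theorem \ref{conj-pos-curv} precisely by combining Propositions \ref{entries1-5}, \ref{entries6-9}, \ref{many-split-iso} and \ref{entries18-12-10}, whose cases together exhaust the nineteen entries of Table \ref{shankar-table} furnished by the Berger--Wallach--Aloff--Wallach--B\'erard-Bergery classification. Your coverage check and your remark that the real content lies in the classification plus the earlier propositions are exactly the paper's logic, so nothing further is needed.
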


\section{Dropping Normality in Positive Curvature}\label{sec5}
\setcounter{equation}{0}
 
In most cases one can eliminate the normality requirement
of Theorem \ref{conj-pos-curv}.  Of course this is automatic
when the adjoint action of $H$ on the tangent space $\gg/\gh$
is irreducible; there every invariant Riemannian metric on $M = G/H$
is normal, so Theorem \ref{conj-pos-curv} applies.  Those are
the spaces given by the entries (1) (2), (3), (4), (5), (10), 
(11), (12) and (18) of Table \ref{shankar-table}.  Some other
cases require an extension of certain results from \cite{W2018}
concerning normal Riemannian homogeneous spaces.  

In \cite{W2018} I verified the Homogeneity Conjecture for
a class of normal Riemannian homogeneous
spaces $M = G/H$ that fiber over homogeneous spaces $M' = G/K$
where $H$ is a local direct factor of $K$.  
We are going to weaken the normality conditions in 
such a way that the results still apply to some of the homogeneous Riemannian 
manifolds of positive curvature type.  In \cite{W2018} the metrics 
on $M$ and $M'$ were required to be the
normal Riemannian metrics defined by the Killing form of $G$.
Instead, we look at Riemannian surjections $\pi : M \to M'$, with fiber $F$,
as follows.

\begin{equation}\label{new-setup}
\begin{aligned}
&G \text{ is a compact connected simply connected Lie group,}\\
&H \subset K \text{ are closed connected subgroups of $G$ such that } \\
& \phantom{XXXX}\text{(i) $M = G/H$\,, $M' = G/K$, and $F=H\backslash K$\,,} \\
& \phantom{XXXX}\text{(ii) $\pi: M \to M'$ is given by  $\pi(gH) = gK$\,,
                right action of $K$ on $G/H$\,, } \\
& \phantom{XXXX}\text{(iii) $M'$ and $F$ are Riemannian symmetric spaces, and}\\
& \phantom{XXXX}\text{(iv) the tangent spaces $\gm'$ for $M'$, $\gm''$
        for $F$ and $(\gm' + \gm'')$ for $M$ satisfy $\gm' \perp \gm''$}\,.
\end{aligned}
\end{equation}

We first modify \cite[Lemma 5.2]{W2018}:

\begin{lemma}\label{new-go-space} Assume {\rm (\ref{new-setup})}.
Then the fiber $F$ of
$M \to M'$ is totally geodesic in $M$
In particular it is a geodesic orbit space, and any geodesic of
$M$ tangent to $F$ at some point is of the form
$t \mapsto \exp(t\xi)x$ with $x \in F$ and $\xi \in \gm''$\,.
\end{lemma}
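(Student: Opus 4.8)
The plan is to prove the three assertions in turn; the total‑geodesic property is the heart of the matter and the other two are formal consequences. Throughout I use the decomposition $\gg = \gh \oplus \gm'' \oplus \gm'$ supplied by (\ref{new-setup}): condition (iii) makes $(G,K)$ a symmetric pair, so $\gg = \gk \oplus \gm'$ with $\gk = \gh \oplus \gm''$ and $[\gk,\gm'] \subseteq \gm'$, and it makes $(K,H)$ a symmetric pair, so $[\gm'',\gm''] \subseteq \gh$; condition (iv) gives $\gm' \perp \gm''$. In particular $[\gm',\gm''] \subseteq \gm'$.

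My main device is the Cartan involution $\theta$ of the symmetric pair $(G,K)$, namely the automorphism of $\gg$ equal to $+1$ on $\gk$ and $-1$ on $\gm'$. Since $G$ is compact, connected and simply connected, $\theta$ integrates to an involution $\Theta \in \Aut(G)$; because $\theta$ fixes $\gk \supseteq \gh$ pointwise and $K$ is connected, $\Theta$ restricts to the identity on $K$, in particular fixes $H$, and so descends to $\bar\Theta : M \to M$, $\bar\Theta(gH) = \Theta(g)H$. The crucial step is to check that $\bar\Theta$ is an \emph{isometry}: its differential at $x_0 = eH$ is $d\theta$ restricted to $\gm' \oplus \gm''$, which is $-1$ on $\gm'$ and $+1$ on $\gm''$ and therefore preserves the inner product on $T_{x_0}M$ precisely because $\gm' \perp \gm''$; and since $\Theta$ is an automorphism of $G$, the pullback $\bar\Theta^{*}ds^2$ is again $G$-invariant, so, agreeing with $ds^2$ at $x_0$, it equals $ds^2$ everywhere.

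Granting this, total geodesy drops out. As $\Theta$ fixes $K$ pointwise, $\bar\Theta$ fixes every point $kH$ of the fiber, so $F$ lies in $\text{Fix}(\bar\Theta)$; the component of $\text{Fix}(\bar\Theta)$ through $x_0$ is a totally geodesic submanifold with tangent space the $(+1)$-eigenspace $\gm''$ of $d\bar\Theta_{x_0}$, and $F$ --- connected, through $x_0$, of dimension $\dim\gm''$ and contained in the fixed set --- coincides with it. One may instead argue directly with Nomizu's formula $\nabla_X Y = \tfrac12[X,Y]_{\gm} + U(X,Y)$ for the canonical reductive structure, where $\gm = \gm' \oplus \gm''$ and $(\cdot)_\gm$ is the projection along $\gh$: for $X,Y \in \gm''$ the bracket term vanishes because $[\gm'',\gm''] \subseteq \gh$, while $U(X,Y)$ has no $\gm'$-component because $[\gm',\gm''] \subseteq \gm'$ together with $\gm' \perp \gm''$ force $\langle U(X,Y),Z\rangle = 0$ for every $Z \in \gm'$; hence the second fundamental form vanishes.

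The two remaining clauses follow formally. Being a Riemannian symmetric space, $F$ is a geodesic orbit space, which is the ``in particular''; and because $F$ is totally geodesic, any geodesic of $M$ tangent to $F$ stays in $F$ and is therefore a geodesic of the symmetric space $F$. The geodesics of $F$ through $x_0$ are exactly $t \mapsto \exp(t\xi)x_0$ with $\xi \in \gm''$, the standard description for a symmetric space, and transitivity of $K$ on $F$ carries this to the asserted form $t \mapsto \exp(t\xi)x$, with $x \in F$ and $\xi \in \gm''$, at every point of tangency. I expect the only real obstacle to be the isometry verification for $\bar\Theta$: that is exactly where the weakened hypothesis $\gm' \perp \gm''$ replaces the normality assumption of \cite{W2018}, and once it is in place the rest is automatic.
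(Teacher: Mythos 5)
The load-bearing step of your main route is the inference ``condition (iii) makes $(G,K)$ a symmetric pair \dots\ and $(K,H)$ a symmetric pair, so $[\gm'',\gm''] \subseteq \gh$,'' and that inference is a genuine gap: (iii) of (\ref{new-setup}) is a statement about the Riemannian manifolds $M'$ and $F$, not about their coset presentations, and a symmetric Riemannian manifold can perfectly well be presented by a non-symmetric pair. This actually happens inside Table \ref{shankar-table}: in entry (16), $S^{4m+3} = Sp(m+1)/Sp(m) \to P^m(\H)$ has $K = Sp(m)\times Sp(1)$, $H = Sp(m)$, fiber $F = Sp(1) \cong S^3$ (symmetric, with bi-invariant metric), yet $\gm''$ is a copy of $\mathfrak{sp}(1)$ with $[\gm'',\gm''] = \gm'' \not\subseteq \gh$, so the bracket-term vanishing in your Nomizu variant is simply false there. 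Likewise (\ref{new-setup}) does not force $[\gm',\gm'] \subseteq \gk$: take $G = Sp(2)$, $K = Sp(1)$, $H = \{1\}$, so that $M' = Sp(2)/Sp(1)$ is the round (symmetric) $S^7$ and $F = Sp(1)$ is the bi-invariant $S^3$; the pair $(\mathfrak{sp}(2),\mathfrak{sp}(1))$ is not symmetric (the fixed algebra of an involution of $\mathfrak{sp}(2)$ is $\mathfrak{u}(2)$ or $\mathfrak{sp}(1)\oplus\mathfrak{sp}(1)$, never $3$-dimensional), so your $\theta$ is not an automorphism and $\Theta$, $\bar\Theta$ do not exist. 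Hence the involution/fixed-point-set argument proves the lemma only under the extra hypothesis that $(G,K)$ is a symmetric pair --- true for every fibration in the last column of the table, so your route does cover the paper's applications, but it does not prove the lemma under (\ref{new-setup}) as stated. The same over-reading infects your last step, where the ``standard description'' of the geodesics of $F$ is quoted for the pair $(K,H)$.

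Your Nomizu alternative, by contrast, is essentially the paper's own proof in different clothing, and it survives the correction. The paper shows directly that every $\xi \in \gm''$ is a geodesic vector of $(M,ds^2)$, checking $\langle [\xi,\eta]_{\gm'+\gm''},\xi\rangle = 0$: for $\eta \in \gm'$ this uses $[\gm'',\gm'] \subseteq \gm'$ together with (iv) --- the same computation as your $U$-term --- and for $\eta \in \gm''$ it uses antisymmetry of $\ad(\eta)$ rather than any inclusion $[\gm'',\gm''] \subseteq \gh$; it then concludes total geodesy because the curves $\exp(t\xi)x_0$ lie in $F$ and are the typical geodesics of the symmetric space $F$. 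Your second fundamental form computation can be repaired even more cheaply: you only need the $\gm'$-component of $\nabla_X Y$ to vanish, and $[X,Y] \in [\gm'',\gm''] \subseteq \gk = \gh + \gm''$ always has tangential $\gm$-part, so the bracket term is harmless whether or not it vanishes; only the $U$-term needs (iv) and $[\gm',\gm''] \subseteq \gm'$, and the latter holds because $\gm'$ is an $\Ad(K)$-invariant complement --- no symmetric pair required. With that one change your Nomizu route is sound and matches the paper, while the involution route should be either dropped or stated with the explicit extra hypothesis. Your instinct about where (iv) enters was right --- it is exactly what kills the cross terms --- but the ``only real obstacle'' is not the isometry verification for $\bar\Theta$; it is the existence of $\bar\Theta$ at all.
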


\begin{proof}
Restrict the adjoint representation of $\gg$ to $\gh$\,.  Then
$\gg = \gh + \gm' + \gm''$ where $\gh$ acts by its adjoint representation
(on itself), by the restriction of the isotropy representation of
$\gk$ on $\gm'$, and by its isotropy representation on $\gm''$.
If $\xi \in \gm'' (\subset \gk)$
and $\eta \in \gm' (\subset \gg)$ then $[\xi,\eta] \in \gm'$, so
$\langle [\xi,\eta]_{\gm' + \gm''},\xi\rangle = 0$ because
$\langle \xi,\gm'\rangle = 0$.  If $\xi, \eta \in \gm''$ then
$\ad(\eta)$ is antisymmetric so $[\xi,\eta] \perp \xi$\,, i.e.
$\langle [\xi,\eta]_{\gm' + \gm''},\xi\rangle = 0$.  We have just shown that
if $\xi \in \gm''$ then $t \mapsto \exp(t\xi)H$ is a geodesic in
$M$ based at $1H$.  Since it is a typical geodesic in the symmetric space
$F = H\backslash K$ we conclude that $F$ is totally geodesic in $M$.
\end{proof}

Now we set up the conditions for applying (\ref{new-setup}) in the 
possible absence of normality.

\begin{lemma}\label{apply-setup}
Suppose that $(M,ds^2)$ is one of the entries of\, 
{\rm Table \ref{shankar-table}} for which $G = \bI(M,ds^2)^0$,
that $(M,ds^2)$ satisfies {\rm (\ref{new-setup})}, and that
$\pi: (M,ds^2) \to (M',ds'^2)$ is a Riemannian submersion.  
Then the Homogeneity Conjecture holds for $(M,ds^2)$.
\end{lemma}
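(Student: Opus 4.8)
The plan is to run the argument of \cite[Theorem 6.1]{W2018} for the fibration $\pi: M \to M'$, but to isolate exactly where that proof invoked the normal metric and to replace each such step by a consequence of (\ref{new-setup}) together with the Riemannian submersion hypothesis. By the easy half of the Homogeneity Conjecture, recalled in Section \ref{sec1}, only the reverse implication needs proof: assuming that every $\gamma \in \Gamma$ is of constant displacement, I must show $\Gamma \backslash M$ is homogeneous, equivalently that the centralizer $Z_{\bI(M,ds^2)}(\Gamma)$ is transitive on $M$. Since $\Gamma$ is generated by its elements, it suffices to analyze a single constant displacement isometry $\gamma$ and exhibit enough of its centralizer. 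Because $G = \bI(M,ds^2)^0$, I would first dispose of the components outside $G$ by passing to a power $\gamma^k \in G$ — which again lies in $\Gamma$ and so is of constant displacement — along the lines of Lemma \ref{entry-12}, and then concentrate on $\gamma \in G$.

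First I would show that $\gamma$ respects the fibration and descends to an isometry $\gamma'$ of $(M',ds'^2)$. Every left translation by $g_0 \in G$ carries the fiber over $gK$ to the fiber over $g_0 g K$, so $G$ acts by fibration-preserving isometries and descends to the transitive $G$-action on $M'$; the finitely many remaining components of $\bI(M,ds^2)$ are the fibration-compatible extensions recorded in Table \ref{shankar-table}. Using that $\pi$ is a Riemannian submersion, hence distance non-increasing, together with the $G$-equivariance of the displacement function, I would verify that $\gamma'$ is itself of constant displacement on $M'$. As $M'$ is one of the symmetric (indeed projective) base spaces of Table \ref{shankar-table}, the Homogeneity Conjecture holds there by \cite{W1962}, so $\gamma'$ is a Clifford translation whose centralizer is transitive on $M'$; this determines the horizontal behavior of $\gamma$ and supplies centralizing isometries moving any fiber to any other.

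It then remains to control $\gamma$ within a fiber. Here I would invoke Lemma \ref{new-go-space}: each fiber $F = H \backslash K$ is totally geodesic and symmetric, and any geodesic tangent to it is $t \mapsto \exp(t\xi)x$ with $\xi \in \gm''$. This realizes the vertical displacement of $\gamma$ by one-parameter subgroups of $K$ and reduces the fiber problem to the Homogeneity Conjecture on the symmetric space $F$, again known by \cite{W1962}. The orthogonal splitting $\gm' \perp \gm''$ of (\ref{new-setup})(iv) lets the horizontal and vertical contributions to the displacement be treated independently, so that a centralizing isometry transitive along the base can be combined with one transitive along the fibers to give transitivity of $Z_{\bI(M,ds^2)}(\gamma)$ on all of $M$.

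The main obstacle lies in the passage between $M$ and its base and fiber without the normal metric. In the normal case every geodesic issuing from the base point is an orbit $\exp(t\xi)x_0$ of a one-parameter subgroup, and the $\Ad$-invariance furnished by the Killing form converts the constant displacement condition into the tractable statement used in \cite{W2018}; dropping normality, this description survives only along the fibers, by Lemma \ref{new-go-space}, and on the symmetric base. The delicate points are therefore (a) proving that $\gamma'$ is genuinely of constant displacement, not merely of displacement bounded by that of $\gamma$, and (b) checking that horizontal geodesics of the submersion project to base geodesics compatibly with the displacement computation, so that the orthogonal decoupling is strong enough to reproduce the conclusion of \cite[Theorem 6.1]{W2018}. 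I expect that re-examining that proof under hypotheses (\ref{new-setup}) and the submersion condition, rather than the normal metric, is where the real work is concentrated.
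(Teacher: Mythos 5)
There is a genuine gap, and it sits exactly at the two points you yourself flagged and left unresolved. First, your reduction to the base fails at the start: for a Riemannian submersion the displacement of the projected isometry satisfies only $\delta_{\gamma'}(\pi x) = \inf\{d_M(x,z) : z \in \pi^{-1}(\gamma'\pi(x))\} \leqq \delta_\gamma(x)$, and there is no reason for equality pointwise, so constant displacement of $\gamma$ on $M$ does not yield constant displacement of $\gamma'$ on $(M',ds'^2)$; without that, the appeal to \cite{W1962} on the symmetric base never gets off the ground. Second, even granting control on base and fiber separately, your final splicing step is not valid: a transitive centralizer of $\gamma'$ inside $\bI(M',ds'^2)$ consists of isometries of $M'$, and lifting them to isometries of $M$ that centralize $\gamma$ is a nontrivial problem you do not address; likewise the claimed ``orthogonal decoupling'' of displacement via $\gm' \perp \gm''$ is an infinitesimal statement at the identity coset and does not make $d_M$ split into horizontal and vertical contributions away from product situations. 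A smaller but real defect: passing to a power $\gamma^k \in G$ constrains only $\gamma^k$, not $\gamma$ itself in the outer components, and you give no substitute for the case analysis that would be needed there.

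The paper's proof runs on an idea absent from your proposal: scanning the last column of Table \ref{shankar-table}, in every relevant case $\rank G = \rank K$, i.e.\ $\chi(M') > 0$, so every element of $G$ has a fixed point on the base $M'$. With that fixed-point mechanism replacing normality, the proof takes $\gamma \in G$ of constant displacement in the form $(g, r(k))$ ($g$ acting on the left, $r(k)$ the right action of the normalizer of $H$) as in the starting point of \cite[Proposition 5.4]{W2018}, notes that $ds'^2$ is the standard metric on $M'$ because it is $G$-invariant, adapts that argument to show $\Gamma \cap G$ is central in $G$, and then invokes \cite[Lemma 5.5]{W2018} (not \cite[Theorem 6.1]{W2018}, which the paper reserves for entry (19)) to handle the intersection of $\Gamma$ with the other components of $\bI(M,ds^2)$. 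The conclusion is that $\Gamma$ centralizes the transitive group $G$, whence homogeneity of $\Gamma\backslash M$. Your use of Lemma \ref{new-go-space} for the totally geodesic fibers is consonant with the paper's framework, but without the rank observation the positive-curvature classification buys you nothing, and the two delicate points you deferred are precisely where your route breaks down.
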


\begin{proof}  Running over the last column of Table \ref{shankar-table}
we see something surprising: in each case $\rank G = \rank K$, in other
words $\chi(M') > 0$.  That noted, we use the starting point of the 
proof of \cite[Proposition 5.4]{W2018}.  There one takes $\gamma \in G$ of 
constant displacement on $M$ to be of the
form $(g,r(k))$ with $g \in G$ acting on the left on $M = G/H$ and
$r(k)$ given by the right action of the normalizer of $H$ in $G$.
The metric $ds'^2$ is the usual one in each case because it is
$G$--invariant.  The argument of \cite[Proposition 5.4]{W2018} adapts to 
show that $\Gamma \cap G$ is central in $G$. Further, \cite[Lemma 5.5]{W2018}
goes through for the intersection of $\Gamma$ with other components
of $\bI(M,ds^2)$.  Thus, if $\Gamma$ is a group of isometries of constant
displacement on $(M,ds^2)$, then $\Gamma$ 
centralizes $G$.  The Homogeneity Conjecture follows for $(M,ds^2)$.
\end{proof}

The argument of Lemma \ref{apply-setup} does not apply directly to
entry (6) of Table \ref{shankar-table}, so we give another argument
specific to that case.  
There $\rank H = \rank G$ so every element of
$G$ has a fixed point on $M$.  If $\Gamma \ne \{1\}$ is a subgroup
of $\bI(M,ds^2)$ acting freely on $M$ then $\Gamma = \{1,g\nu\}$ where
$g \in G$ and $\nu$ is complex conjugation.  As $(g\nu)^2 = 1$ we have
$g\cdot {^tg}^{-1} = g\overline{g} = \pm I$ in terms of matrices.
Then ${^tg} = cg$ for some $c \in \C$ and $g = c^2g$ so $c = \pm 1$.
If $c = 1$ then $g = {^t}g$.  In that case $g$ is diagonalized by a
real matrix and $g\nu$ has a fixed point.  Thus $c = -1$ and we may assume
$g = \left ( \begin{smallmatrix} 0 & I \\ -I & 0 \end{smallmatrix} \right )$.
Thus $g\nu$ centralizes $G$, and that proves the Homogeneity Conjecture
for case (6) of Table \ref{shankar-table}.

A small modification of the argument of Lemma \ref{apply-setup} applies
to entry (19), where $G = \bI(M,ds^2)$.  We simply note that the proof
of \cite[Theorem 6.1]{W2018} proves the Homogeneity Conjecture
for case (19) of Table \ref{shankar-table}.

Now we run over the entries of Table \ref{shankar-table}.  As noted above,
in cases (1), (2), (3), (4), (5), (10), (11), (12) and (18) the group $H$
is irreducible on the tangent space of $M$, so the metric is the normal
Riemannian metric, and the Homogeneity Conjecture follows from the normal
metric case, Theorem \ref{conj-pos-curv}.  And we just dealt with entries
(6) and (19).  Now we run through the other entries.

In table entries (7), (8) and (9), $G = \bI(M,ds^2)^0$, and in entries
(13) and (14) we may assume $G = U(3) = \bI(M,ds^2)^0$.  For all of
those $\gm' \perp \gm''$ because the representations of $H$ on those
spaces have no common summand, and $\pi: (M,ds^2) \to (M',ds'^2)$ is a 
Riemannian submersion because $(M',ds'^2)$ is an irreducible Riemannian
symmetric space.  Thus Lemma \ref{apply-setup} applies to (7), (8),
(9), (13) and (14).

Consider the table entries for which $M$ is an odd sphere.  Cases (1),
(18) and (19) have already been dealt with, leaving (15), (16) and (17).
Combining all the results of this section we have

\begin{theorem}\label{conj-non-normal}
Let $(M,ds^2)$ be a connected, simply connected homogeneous Riemannian
manifold of strictly positive curvature, where $ds^2$ is not required
to be the normal Riemannian metric.  Suppose that $(M,ds^2)$ is not
one of the entries {\rm (15), (16)} or {\rm (17)} of 
{\rm Table \ref{shankar-table}}.  Then the Homogeneity Conjecture
is valid for $(M,ds^2)$.
\end{theorem}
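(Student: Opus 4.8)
The plan is to prove the theorem by a case-by-case sweep through the nineteen entries of Table \ref{shankar-table}, sorting them into three families according to the tool that disposes of each, and checking that every entry other than (15), (16), (17) lands in a family already under control. The first family consists of the $M = G/H$ whose isotropy action of $H$ on $\gg/\gh$ is irreducible. For these, every $G$-invariant metric is a positive multiple of the normal one, so the phrase ``$ds^2$ not required to be normal'' adds nothing new and Theorem \ref{conj-pos-curv} applies verbatim. First I would list these entries --- (1) through (5), together with (10), (11), (12) and (18) --- and dispose of them by this one remark.

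The second and principal family consists of the fibred entries $\pi : M = G/H \to G/K = M'$ for which genuine non-normal invariant metrics occur, so that one must deploy the weakened setup (\ref{new-setup}) together with Lemma \ref{apply-setup}. For each such entry I would check the two substantive hypotheses of that lemma. The orthogonality $\gm' \perp \gm''$ is a representation-theoretic point: the isotropy representation of $H$ on the base direction $\gm'$ and on the fibre direction $\gm''$ share no common irreducible summand, so every invariant inner product automatically splits them orthogonally, yielding (\ref{new-setup})(iv); conditions (i)--(iii) are then structural, with $M'$ and the fibre $F = H\backslash K$ being the relevant symmetric spaces. That $\pi$ is a Riemannian submersion follows from the base being an \emph{irreducible} symmetric space, which pins $ds'^2$ down to a scalar multiple of the standard metric. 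Lemma \ref{apply-setup} then forces any constant-displacement group $\Gamma$ to centralize $G$, and the conjecture follows. I would run this through entries (7), (8), (9), (13) and (14), reducing first to $G = U(3) = \bI(M,ds^2)^0$ in the last two.

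Two further entries need individually tailored arguments, which I would simply cite from the discussion preceding the theorem. For (6) one has $\rank H = \rank G$, so every element of $G$ fixes a point; a nontrivial $\Gamma$ acting freely must therefore have the form $\{1, g\nu\}$ with $\nu$ complex conjugation, and the relation $(g\nu)^2 = 1$ pushes $g\nu$ into the centralizer of $G$. For (19), $S^{15} = Spin(9)/Spin(7)$, one has $G = \bI(M,ds^2)$ and a direct appeal to \cite[Theorem 6.1]{W2018} gives the conjecture. Assembling the three families then yields the theorem for all entries except the three odd spheres.

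The hard part --- and the precise reason for the exclusion --- lives in entries (15) $S^{2m+1} = SU(m+1)/SU(m)$, (16) $S^{4m+3} = Sp(m+1)/Sp(m)$ and (17) $S^3 = SU(2)$. Here the isotropy is reducible, so non-normal invariant metrics genuinely exist, and, crucially, the simply connected group $G$ realizing the positive-curvature structure is a \emph{proper} subgroup of $\bI(M,ds^2)^0$: one has $SU(m+1) \subsetneq U(m+1)$, $Sp(m+1) \subsetneq Sp(m+1)\times_{\Z_2} Sp(1)$, and $SU(2) \subsetneq SO(4)$. The missing directions --- the central circle on $S^{2m+1}$, the right $Sp(1)$ on $S^{4m+3}$, the right $SU(2)$ on $S^3$ --- are exactly the classical Clifford translations, which do produce nontrivial constant-displacement isometries with homogeneous quotients. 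Because Lemma \ref{apply-setup} presupposes $G = \bI(M,ds^2)^0$ and so cannot control these outside isometries once the metric is allowed to be non-normal, the present machinery stops short here, and these three spheres are therefore removed from the statement. The theorem asserts exactly what the assembled tools can reach.
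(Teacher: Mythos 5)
Your proposal is correct and follows essentially the same route as the paper: the same three-way sorting into irreducible-isotropy entries handled by Theorem \ref{conj-pos-curv}, the fibred entries (7), (8), (9), (13), (14) handled via (\ref{new-setup}) and Lemma \ref{apply-setup} with the identical representation-theoretic orthogonality and irreducible-base submersion checks, and the same tailored arguments for (6) and (19). Your closing explanation of \emph{why} (15), (16), (17) must be excluded (the extra Clifford translations outside $G$, with $G \subsetneq \bI(M,ds^2)^0$) is accurate supplementary motivation that the paper leaves implicit.
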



\begin{thebibliography}{99}

\bibitem{AW1974}
S. Aloff \& N. R. Wallach,
An infinite family of distinct $7$--manifolds admitting positively curved
Riemannian structure, Bull. Amer. Math. Soc. {\bf 81} (1975), 93--97.

\bibitem{B1976}
L. B\' erard-Bergery, 
Les vari\' et\' es riemanniennes homog\` enes simplement connexes de 
dimension impaire \` a courbure strictement positive, J. Math. Pures Appl. 
{\bf 55} (1976), 47--67.

\bibitem{BN2008a} 
V. N. Berestovskii \& Y. G. Nikonorov, 
Killing vector fields of constant length on locally symmetric Riemannian 
manifolds. Transformation Groups, {\bf 13} (2008), 25--45. 

\bibitem{BN2008b} 
V. N. Berestovskii \& Y. G. Nikonorov, 
On Clifford--Wolf homogeneous Riemannian manifolds, 
Doklady Math., {\bf 78} (2008), 807--810. 

\bibitem{BN2009} 
V. N. Berestovskii \& Y. G. Nikonorov, 
Clifford--Wolf homogeneous Riemannian manifolds. J. Differential Geom. 
{\bf 82} (2009), 467--500.

\bibitem{B1961}
M. Berger, 
Les vari\' et\' es riemanniennes homog\` enes normales simplement connexes 
\` a courbure strictement positive,  Ann. Scuola Norm. Sup. Pisa 
{\bf 15} (1961), 179--246.

\bibitem{C1983}  
O. C\' ampoli, 
Clifford isometries of compact homogeneous Riemannian manifolds, 
Revista Uni\' on Math. Argentina {\bf 31} (1983), 44--49. 

\bibitem{C1986}
O. C\' ampoli,
Clifford isometries of the real Stieffel manifolds, 
Proc. Amer. Math. Soc. {\bf 97} (1986), 307--310.

\bibitem{DW2013} 
S. Deng \& J. A. Wolf, 
Locally symmetric homogeneous Finsler spaces,
International Mathematical Research Notes (IMRN) {\bf 2013} (2012),
4223--4242. 

\bibitem{DX2013} 
S. Deng \& M. Xu,
Clifford--Wolf Homogeneous Randers spaces,
J. Lie Theory {\bf 23} (2013), 837--845. 

\bibitem{DX2014a}
S. Deng \& M. Xu,
Clifford--Wolf Homogeneous Randers spheres, Israel J. Math., {\bf 199} 
(2014), 507--525.

\bibitem{DX2014b}
S. Deng \& M. Xu,
Clifford--Wolf translations of left invariant Randers metrics on compact 
Lie groups, Quarterly J. Math.  {\bf 65} (2014), 133--148.

\bibitem{DX2014c}
S. Deng \& M. Xu, 
Clifford--Wolf translations of Finsler spaces. Forum Math. {\bf 26} 
(2014), 1413--1428. 

\bibitem{DX2015a} 
S. Deng \& M. Xu,
Left invariant Clifford--Wolf homogeneous $(\alpha,\beta)$--metrics on 
compact semisimple Lie groups. Transform. Groups {\bf 20} (2015), 395--416. 

\bibitem{DX2015b} 
S. Deng \& M. Xu,
Clifford--Wolf homogeneous Finsler metrics on spheres,
Annali di Matematica Pura ed Applicata {\bf 194} (2015), 759--766.

\bibitem{DX2016}
S. Deng \& M. Xu,
$(\alpha_1,\alpha_2)$--metrics and Clifford--Wolf homogeneity. J. Geom. 
Anal. {\bf 26} (2016), 2282--2321.

\bibitem{MMW1986} 
I. Dotti Miatello, R. J. Miatello \& J. A. Wolf,
Bounded isometries and homogeneous Riemannian quotient manifolds,
Geometriae Dedicata, {\bf 21} (1986), 21--28.

\bibitem{D1983} 
M. J. Druetta, 
Clifford translations in manifolds without focal points, 
Geometriae Dedicata, {\bf 14} (1983), 95--103.

\bibitem{F1963}
H. Freudenthal, 
Clifford--Wolf-Isometrien symmetrischer
R\" aume, Math. Ann. {\bf 150} (1963), 136--149.

\bibitem{O1969}
V. Ozols, 
Critical points of the displacement function
of an isometry, J. Differential Geometry {\bf 3} (1969), 411--432.

\bibitem{O1973}
V. Ozols,
Critical sets of isometries, Proc. Sympos. Pure Math.
{\bf 27} (1973), 375--378.

\bibitem{O1974}
V. Ozols,
Clifford translations of symmetric spaces,
Proc. Amer. Math. Soc. {\bf 44} (1974), 169--175.

\bibitem{S2001}
K. Shankar,
Isometry groups of homogeneous spaces with positive sectional curvature,
Diff. Geom. Appl. {\bf 14} (2001), 57--78.

\bibitem{dS1956}
J. de Siebenthal, 
Sur les groupes de Lie compacts non connexes, Comment. Math. Helv. 
{\bf 31} (1956), 41--89.

\bibitem{V1947}
G. Vincent,
Les groupes lin\' eaires finis sans points fixes,
Commentarii Mathematici Helvetici {\bf 20} (1947), 117--171.

\bibitem{W1972}
N. R. Wallach, 
Compact homogeneous Riemannian manifolds with strictly positive curvature,
Ann. Math. {\bf 96} (1972), 277--295.

\bibitem{W1960}
J. A. Wolf,
Sur la classification des vari\' et\' es riemanni\`ennes
homog\` enes \` a courbure constante, C. R. Acad. Sci. Paris,
{\bf 250} (1960), 3443--3445.

\bibitem{W1961}
J. A. Wolf,
Vincent's conjecture on Clifford translations of the sphere,
Commentarii Mathematici Helvetici, {\bf 36} (1961), 33--41.

\bibitem{W1962}
J. A. Wolf,
Locally symmetric homogeneous spaces.  Commentarii
Mathematici Helvetici, {\bf 37} (1962), 65--101.

\bibitem{W1964}
J. A. Wolf,
Homogeneity and bounded isometries in manifolds of
negative curvature, Illinois Journal of Mathematics, {\bf 8} (1964), 14--18.

\bibitem{W2016} 
J. A. Wolf, 
Bounded isometries and homogeneous quotients, Journal
of Geometric Analysis {\bf 26} (2016), 1--9.  

\bibitem{W2018} 
J. A. Wolf, 
Homogeneity for a class of Riemannian quotient manifolds,
Differential Geometry and its Applications {\bf 56} (2018), 355--372.

\bibitem{XW2016} 
M. Xu \& J. A. Wolf,
Killing vector fields of constant length on Riemannian normal homogeneous
spaces, Transformation Groups {\bf 21} (2016), 871--902.  

\bibitem{Z2007}
W. Ziller,
Examples of Riemannian manifolds with non--negative sectional curvature,
in Metric and Comparison Geometry, Surveys in Differential Geometry {\bf 11},
International Press 2007, 63--102.

\end{thebibliography}
\end{document}